\newtheoremstyle{mytheorem}% name % cf. thmtest.tex of AMSLaTeX
{3pt}% Space above
{3pt}% Space below
{\slshape}% Body font
{}% Indent amount (empty = no indent,
\numberwithin{equation}{section}
\theoremstyle{theorem}
\newtheorem{theorem}{Theorem}[section]
\newtheorem*{theorem*}{Theorem}
\providecommand{\customgenericname}{}
\newcommand{\newcustomtheorem}[2]{%
	\newenvironment{#1}[1]
	{%
		\renewcommand\customgenericname{#2}%
		\renewcommand\theinnercustomgeneric{##1}%
		\innercustomgeneric
	}
	{\endinnercustomgeneric}
}
\theoremstyle{definition}
\newtheorem*{example*}{Example}
\newtheorem*{examples*}{Examples}
\newtheorem{remark}{Remark}[section]
\newtheorem*{remark*}{Remark}
\newtheorem*{remarks*}{Remarks}
\newtheoremstyle{named}{}{}{\itshape}{}{\bfseries}{.}{.5em}{#1\thmnote{ #3}}
\theoremstyle{named}
\begin{document}

\title[Ramanujan-type series]{Proofs of conjectures on Ramanujan-type series of level 3}

\author[J. M. Campbell]{John M. Campbell}
\address[J. M. Campbell]{Department of Mathematics and Statistics, Dalhousie University, Halifax, NS, B3H 4R2, Canada}
\email{jmaxwellcampbell@gmail.com}

\date{}

\begin{abstract}	
 A Ramanujan-type series satisfies $$ \frac{1}{\pi} = \sum_{n=0}^{\infty} \frac{\left( \frac{1}{2} \right)_{n} \left( \frac{1}{s} \right)_{n} \left(1 - 
 \frac{1}{s} \right)_{n} }{ \left( 1 \right)_{n}^{3} } z^{n} (a + b n), $$ where $s \in \{ 2, 3, 4, 6 \}$, and where $a$, $b$, and $z$ are real algebraic 
 numbers. The level $3$ case whereby $s = 3$ has been considered as the most mysterious and the most challenging, out of all possible values for $s$, 
 and this motivates the development of new techniques for constructing Ramanujan-type series of level $3$. Chan and Liaw introduced an alternating 
 analogue of the Borwein brothers' identity for Ramanujan-type series of level $3$; subsequently, Chan, Liaw, and Tian formulated another proof of the 
 Chan--Liaw identity, via the use of Ramanujan's class invariant. Using the elliptic lambda function and the elliptic alpha function, we prove, using a 
 limiting case of the Kummer--Goursat transformation, a new identity for evaluating $z$, $a$, and $b$ for Ramanujan-type series such that $s = 3$ and 
 $z < 0$, and we apply this new identity to prove three conjectured formulas for quadratic-irrational, Ramanujan-type series that had been discovered via 
 numerical experiments with Maple in 2012 by Aldawoud. We also apply our identity to prove a new Ramanujan-type series of level $3$ with quartic values 
 for $z < 0$, $a$, and $b$. 
\end{abstract}

\subjclass[2020]{11Y60, 33C75}

\keywords{Ramanujan-type series, complete elliptic integral, modular relation, elliptic lambda function, elliptic alpha function}

\maketitle
	
\section{Introduction}\label{sectionIntroduction}
 In his first letter to Hardy in January 1913, Ramanujan \cite[p.~25, Sect.~(5), eq.~(3)]{BR1995} presented the following remarkable series for $1/\pi$: 
\begin{equation}\label{slowestRamanujan}
	\frac{1}{\pi} = \sum_{n = 0}^{\infty} \frac{ \left( \frac{1}{2} \right)_{n}^3 }{ \left( 1 \right)_{n}^3} (-1)^{n} \big(\tfrac{1}{2}+2n\big).
\end{equation}
Soon after, what are referred to as Ramanujan's seventeen series for $1/\pi$ were introduced in \cite{Ramanujan1914} (see also \cite[pp.~36--38]{Ramanujan2000}), and have gone on, over the decades, to have a great influence on many different areas in mathematics. This motivates the development of mathematical topics based on the application of generalizations and variants of Ramanujan's seventeen series. Formally, we say a \emph{Ramanujan-type series} is of the form
\begin{equation}\label{maindefinition}
 \frac{1}{\pi} = \sum_{n = 0}^{\infty} \frac{\left( \frac{1}{2} \right)_{n} \left( \frac{1}{s} \right)_{n} 
 \left(1 - \frac{1}{s} \right)_{n} }{ \left( 1 \right)_{n}^{3} } z^n (a + b n), 
\end{equation}
 where $s \in \{ 2, 3, 4, 6 \}$ and where the parameters $a$, $b$, and $z$ are real algebraic numbers. This definition broadly agrees with relevant 
 references that have inspired our work, as in \cite{CooperZudilin2019,Guillera2020,Guillera2013,Guillera2006Exp,Guillera2021,Guillera2019,Wan2014}. 
 Historically, Ramanujan-type series may be classified by their \emph{level} $\ell$, under the mapping $\ell:=4\sin^2\frac{\pi}{s}$, so as to reach a 
 correspondence with the levels of the modular forms that parameterize Ramanujan-type series. See, for instance, \cite{Guillera2021}. The work of Guillera 
 \cite{Guillerafastest2021} on the level 3 case of \eqref{maindefinition} has led us to investigate new techniques for proving closed-form evaluations, 
 in the $s = 3$ case of \eqref{maindefinition}, for the expressions $z$, $a$, and $b$ in \eqref{maindefinition}. The alternative bases and modular 
 relations associated with the level $3$ case of \eqref{maindefinition} have been considered as the most important \cite{BBG1995} and most interesting 
 \cite{ChanGeeTan2003} out of all possible levels for Ramanujan-type series, which motivates the results and techniques we introduce concerning 
 Ramanujan-type series of level $3$. 

 Subsequent to the seminal monograph \emph{Pi and the AGM} \cite{BorweinBorwein1987text} by the Borwein brothers and to further groundbreaking works by the Borweins on Ramanujan-type series \cite{BorweinBorwein1993,BorweinBorwein1988,BorweinBorwein1987rational,BorweinBorwein1992}, an especially notable reference that introduced a large number of Ramanujan-type series is due to Baruah and Berndt \cite{BaruahBerndt2010}, who derived their results 
 using techniques based on Eisenstein series. In \cite{BaruahBerndt2010}, Baruah and Berndt's Ramanujan-type formulas are restricted to the scenarios 
 where $s$ is among $\{ 2, 4, 6 \}$. This greatly motivates the problem as to how new Ramanujan-type series in the vein of the results from 
 \cite{BaruahBerndt2010}, in which the $z$- and $a$- and $b$-values in \eqref{maindefinition} are often quadratic or quartic, could be determined for 
 the $s = 3$ case. This forms a main purpose of our article, in which we: 

\begin{enumerate}

\item Prove a new identity, in terms of the elliptic lambda and elliptic alpha functions, for evaluating $z$, $a$, and $b$ in \eqref{maindefinition}, for the 
 case whereby $s = 3$ and $z < 0$, with our techniques being fundamentally different compared to the work of Chan and Liaw \cite{ChanLiaw2000} and 
 of Chan, Liaw, and Tan \cite{ChanLiawTan2001} on the $s = 3$ and $z < 0$ case of \eqref{maindefinition}; 

\item Apply our identity for evaluating $z < 0$, $a$, and $b$ to prove three conjectured formulas on quadratic-irrational, Ramanujan-type series that had 
 been discovered experimentally via numerical experiments with Maple by Aldawoud in 2012 \cite{Aldawoud2012}; and 

\item Apply our identity to prove an evaluation for a new Ramanujan-type series of level $3$, with quartic values for $z < 0$, $a$, and $b$. 

\end{enumerate}
 
\section{Background and preliminaries}
 The \emph{Pochhammer symbol} is defined by 
\begin{align*}
 (x)_n:=\begin{cases}
 1, & n=0,\\
 x(x+1) \cdots (x + n - 1), & n\in\mathbb{N}.
 \end{cases}
\end{align*}
 We let ${}_{2}F_{1}$-series be denoted and defined so that 
\begin{equation*}
 {}_{2}F_{1}\!\!\left[ \begin{matrix} 
 a, b \vspace{1mm}\\ 
 c \end{matrix} \ \Bigg| \ x 
 \right] = \sum_{n=0}^{\infty} \frac{ \left( a \right)_{n} \left( b \right)_{n} }{ \left( c \right)_{n} } \frac{x^n}{n!}. 
\end{equation*}
 A cubic series due to the Borwein brothers was explored by Chan and Liaw in 
 \cite{ChanLiaw2000}, and what may be regarded as an alternating analogue of this cubic series due to Chan and Liaw 
 \cite{ChanLiaw2000} was reproved in \cite{ChanLiawTan2001}. Following \cite{ChanLiawTan2001}, we set 
\begin{equation}\label{mquotient} 
 m = \frac{ {}_{2}F_{1}\!\!\left[ \begin{matrix} 
 \frac{1}{3}, \frac{2}{3} \vspace{1mm}\\ 
 1 \end{matrix} \ \Bigg| \ \alpha 
 \right] }{ {}_{2}F_{1}\!\!\left[ \begin{matrix} 
 \frac{1}{3}, \frac{2}{3} \vspace{1mm}\\ 
 1 \end{matrix} \ \Bigg| \ \beta 
 \right]}, 
\end{equation}
 and we are letting $\alpha$ and $\beta$ be such that $$ \frac{ {}_{2}F_{1}\!\!\left[ \begin{matrix} 
 \frac{1}{3}, \frac{2}{3} \vspace{1mm}\\ 
 1 \end{matrix} \ \Bigg| \ 1 - \beta 
 \right] }{ {}_{2}F_{1}\!\!\left[ \begin{matrix} 
 \frac{1}{3}, \frac{2}{3} \vspace{1mm}\\ 
 1 \end{matrix} \ \Bigg| \ \beta 
 \right] } 
 = n \frac{ {}_{2}F_{1}\!\!\left[ \begin{matrix} 
 \frac{1}{3}, \frac{2}{3} \vspace{1mm}\\ 
 1 \end{matrix} \ \Bigg| \ 1 - \alpha 
 \right] }{ {}_{2}F_{1}\!\!\left[ \begin{matrix} 
 \frac{1}{3}, \frac{2}{3} \vspace{1mm}\\ 
 1 \end{matrix} \ \Bigg| \ \alpha 
 \right] }.$$
 In something of a similar fashion, the \emph{cubic singular modulus} is the unique value $\alpha_{n}$ 
 such that 
 \begin{equation}\label{cubicalpha}
 \frac{ {}_{2}F_{1}\!\!\left[ \begin{matrix} 
 \frac{1}{3}, \frac{2}{3} \vspace{1mm}\\ 
 1 \end{matrix} \ \Bigg| \ 1 - \alpha_{n} 
 \right] }{ {}_{2}F_{1}\!\!\left[ \begin{matrix} 
 \frac{1}{3}, \frac{2}{3} \vspace{1mm}\\ 
 1 \end{matrix} \ \Bigg| \ \alpha_{n} 
 \right] } = \sqrt{n}. 
\end{equation}
 The Borwein brothers' family of Ramanujan-type series of level 3 is such that the convergence rate $\mathcal{H}_{n}$ satisfies $\mathcal{H}_{n} = 4 
 \alpha_{n}(1- \alpha_{n})$, referring to \cite{ChanLiawTan2001} for details. From \eqref{cubicalpha}, this convergence rate is necessarily positive for 
 $n > 0$. This motivates us to determine an explicit way of evaluating the $z$- and $a$- and $b$-expressions in \eqref{maindefinition} for the $s = 3$ 
 and $z < 0$ case, and with the use of the elliptic lambda and elliptic alpha functions given in the \emph{Pi and the AGM} 
 text \cite{BorweinBorwein1987text}. 

 The infinite family of Ramanujan-type series under consideration in \cite{ChanLiawTan2001} is such that 
\begin{equation}\label{CLTfamily}
 \frac{1}{2\pi} \sqrt{\frac{3}{n}} \sum_{k=0}^{\infty} (a_{n} + b_{n} k) 
 \frac{ \left( \frac{1}{2} \right)_{k} 
 \left( \frac{1}{3} \right)_{k} \left( \frac{2}{3} \right)_{k} }{ \left( k! \right)^{3} } \mathcal{H}_{n}^{k}, 
\end{equation}
 where 
\begin{equation}\label{bada}
 a_{n} = \frac{ \alpha_{n} (1-\alpha_{n}) }{\sqrt{n}} \frac{dm}{d\alpha}(1 - \alpha_{n}, \alpha_{n}) 
\end{equation}
 and $ b_{n} = 1 - 2 \alpha_{n} $ and $ 
 \mathcal{H}_{n} = 4 \alpha_{n} (1 - \alpha_{n})$; see 
 also \cite{Guillerafastest2021} for a family of Ramanujan-type series that is similarly formulated in terms of a 
 transformation of modular origin, as in the modular function in \eqref{mquotient}. This family of Ramanujan-type series given by Guillera 
 \cite{Guillerafastest2021} was derived using material due to Guillera in \cite{Guillera2020} and Wan in \cite{Wan2014}. 

 As expressed in \cite{ChanLiaw2000}, the evaluation of expressions as in \eqref{bada} is very nontrivial. The main difference between previously 
 known formulas for evaluating summands as in \eqref{CLTfamily} and our new technique for determining closed forms for $z$, $a$, and $b$ in 
 \eqref{maindefinition} for the case such that $z < 0$ and $s = 3$ is given by how we do not require the derivative of the $m$-function shown in 
 \eqref{bada}. This has an advantage given by how we may evaluate the equivalent of the $a_{n}$-expression in \eqref{bada} in a more explicit way 
 in terms of the elliptic alpha function introduced by the Borwein brothers \cite{BorweinBorwein1987text}, and this, in turn, has the advantage of allowing 
 us to apply previously known values for the elliptic alpha function. Furthermore, our method gives us a more explicit way of expressing the cubic 
 singular modulus in \eqref{cubicalpha}, and in a way that allows us to apply previously known closed forms for the elliptic lambda function 
 $\lambda^{\ast}$. This leads us toward our review, as below, of the complete elliptic integrals and of the elliptic lambda and elliptic alpha functions. 

 The \emph{complete elliptic integrals} of the first and second kinds \cite[Sect.~1.3]{BorweinBorwein1987text} are, respectively, defined by 
\begin{align*}
	\mathbf{K}(k) := \int_{0}^{\pi/2} \frac{d\theta}{\sqrt{1 - k^2 \sin^2 \theta}} \ \ \ \text{and} 
 \ \ \ \mathbf{E}(k) := \int_{0}^{\pi/2} \sqrt{1 - k^2 \sin^2 \theta} \, d\theta. 
\end{align*}
The argument $k$ indicated above is referred to as the \emph{modulus}, and it is often convenient to write $k' := \sqrt{1-k^2} $ to denote the \emph{complementary modulus}, so that we may write, accordingly, $ \mathbf{K}'(k) := \mathbf{K}(k')$ and $ \mathbf{E}'(k) := \mathbf{E}(k')$. By virtue of \emph{Jacobi's imaginary quadratic transformations}, the two complete elliptic integrals satisfy the following modular equations \cite[p.~72, Exs.~7--8]{BorweinBorwein1987text}:
\begin{align}\label{maincomplex}
 \mathbf{K}\left(i\frac{k}{k'}\right) = k'\mathbf{K}(k) \ \ \ 
 \text{and} \ \ \ \mathbf{E}\left(i\frac{k}{k'}\right) = \frac{1}{k'}\mathbf{E}(k). 
\end{align}
Furthermore, the differential equation such that 
\begin{equation}\label{mainODE}
	\frac{d\mathbf{K}}{dk} 
	= \frac{ \mathbf{E} - \left( k' \right)^{2} \mathbf{K} }{ k \left( k' \right)^{2} } 
\end{equation}
 is to be used in a key way in our work. 

 Next, for a positive argument $r$, the \emph{elliptic lambda function} $\lambda^{\ast}(r)$ is defined so that $ \lambda^{\ast}(r):= k_r$, where $0 < 
 k_{r} < 1$ is such that $ \frac{ \mathbf{K}' }{\mathbf{K}}(k_{r}) = \sqrt{r}$ \cite[p.~67, eq.~(3.2.2)]{BorweinBorwein1987text}. The function $ 
 \lambda^{\ast}$ may equivalently be defined through the use of Ramanujan's $G$-function \cite{Ramanujan1914} such that 
\begin{equation}\label{202310067777372787A7M1A}
 \prod_{k=0}^{\infty} \left( 1 + e^{-(2k+1) \pi \sqrt{n}} \right) 
 = 2^{1/4} e^{-\pi \sqrt{n}/24} G_{n}, 
\end{equation}
 and the $G$-function in \eqref{202310067777372787A7M1A}
 is to be heavily involved in our proofs of Aldawoud's conjectures. 

 The \emph{elliptic alpha function} is such that 
 $ \alpha(r) := \frac{ \mathbf{E}' }{\mathbf{K}} - \frac{\pi}{4 \mathbf{K}^2}$ 
 \cite[p.~152, eq.~(5.1.1)]{BorweinBorwein1987text}, 
 under the modulus $k = \lambda^{\ast}(r)$, and we may rewrite this as \cite[p.~152, eq.~(5.1.2)]{BorweinBorwein1987text}: 
\begin{equation}\label{mainalpha}
	\alpha(r) = \frac{\pi}{4 \mathbf{K}^{2}} - \sqrt{r} \left( \frac{\mathbf{E}}{\mathbf{K}} - 1 \right). 
\end{equation}

 We adopt a notational convention from \cite{BaruahBerndt2010} by setting
\begin{align*}
 A_n := \frac{ \left( \frac{1}{2} \right)_{n}^3 }{ \left( 1 \right)_{n}^3},\qquad
 B_n:=\frac{ 
	\left( \frac{1}{4} \right)_{n} \left( \frac{1}{2} \right)_{n} \left( \frac{3}{4} \right)_{n} }{ \left( 1 \right)_{n}^3 },\qquad
 C_n:=\frac{ 
	\left( \frac{1}{6} \right)_{n} \left( \frac{1}{2} \right)_{n} \left( \frac{5}{6} \right)_{n} }{ \left( 1 \right)_{n}^3 }.
\end{align*}
We also set $ D_n:=\frac{ 
 	\left( \frac{1}{3} \right)_{n} \left( \frac{1}{2} \right)_{n} \left( \frac{2}{3} \right)_{n} }{ \left( 1 \right)_{n}^3 }$. 
 A hypergeometric identity of fundamental importance in the study of Ramanujan-type series is such that
\begin{equation*}
 \sum_{n=0}^{\infty} A_{n} x^{n} 
 = \frac{4 \mathbf{K}^2 \left( \sqrt{\frac{1 - \sqrt{1-x}}{2}} \right)}{\pi ^2}. 
\end{equation*}
This relation is highlighted in a different form in \cite[p.~180, Theorem 5.7(a), eq.~(i)]{BorweinBorwein1987text}. The generating function of $B_n$ is of 
 great importance in the quartic theory of elliptic functions, 
 and we recall the following equivalent formulation of \cite[p.~181, Theorem 5.7(b), eq.~(iv)]{BorweinBorwein1987text}:
\begin{equation}\label{Bgenerating}
	\sum_{n=0}^{\infty} B_{n} x^{n} = 
	\frac{4 \sqrt{2}\, \mathbf{K}^2\left(\sqrt{\frac{1}{2} - \frac{\sqrt{\frac{1-\sqrt{1-x}}{x}}}{\sqrt{2}}}\right)}{\pi ^2 \sqrt[4]{2 \sqrt{1-x}+2-x}}. 
\end{equation}
According to \cite[p.~181, Theorem 5.7(c), eq.~(vi)]{BorweinBorwein1987text}, 
\emph{Bailey's cubic transformation} can be stated as
\begin{equation}\label{Baileycubic}
 \sum_{n=0}^{\infty} A_{n} x^{n} = \frac{2}{\sqrt{4-x}} \sum_{n=0}^{\infty} C_{n} \left( \frac{27 x^2}{(4-x)^3} \right)^{n}. 
\end{equation}
Another specialization of Bailey's transformation \cite[p.~181, eq.~(5.5.9)]{BorweinBorwein1987text} tells us that
\begin{equation}\label{Baileycubicanother}
 \sum_{n=0}^{\infty} A_{n} x^{n} = \frac{1}{\sqrt{1-4x}} \sum_{n=0}^{\infty} C_{n} \left( -\frac{27 x}{(1-4x)^3} \right)^{n}. 
\end{equation}
Furthermore, a limiting case of the \emph{Kummer--Goursat transform}, as given by squaring \cite[eq.~(25)]{Vid2009}, provides a connection between the generating functions of $C_n$ and $D_n$: 
\begin{align}
 \sum_{n=0}^{\infty} D_n \big(4 y (1 - y)\big)^{n} = \frac{3}{\sqrt{9 - 8 y}} \sum_{n=0}^{\infty} C_n \left( \frac{ 64 y^3(1-y) }{(9 - 8 y)^3} \right)^{n}. \label{20230219259AM2A} 
\end{align}

\section{New applications of modular relations}\label{sectionmodular}
 Our main identity, which we are to apply in Section \ref{subsectionProofs} to prove conjectures due to Aldawoud \cite{Aldawoud2012}, and which we 
 are to apply in Section \ref{subsectionnewRamanujan} to prove a new Ramanujan-type series of level 3 with a quartic convergence rate and quartic 
 coefficients, is given as Theorem \ref{maintheorem} below. 
 
 The key relation in \eqref{mainobstacle} can be shown to be equivalent to the relation $$ \frac{ (9 - 8 \alpha_{n})^{3} }{ 64 \alpha_{n}^{3}(1 - 
 \alpha_{n}) } = \frac{ ( 4 G_{3n}^{24} - 1 )^3 }{ 27 G_{3n}^{24} } $$ used by the Borwein brothers for evaluating $\alpha_{n}$ and later 
 considered by Chan and Liaw in \cite{ChanLiaw2000}. Our formula for the $a$-value given in Theorem \ref{maintheorem} below appears to be new, 
 and this is to be applied to solve open problems given by Aldawoud in \cite{Aldawoud2012}. 

\begin{theorem}\label{maintheorem}
 We set 
\begin{equation}\label{xinmain}
 x = 4 \left( \left(\lambda^{\ast}(r)\right)^2 - \left(\lambda^{\ast}(r)\right)^4\right). 
\end{equation}
 We further set $y$ as the unique solution in $(-\frac{1}{2},0)$, if such a solution exists, such that 
\begin{equation}\label{mainobstacle}
 \frac{64 (y-1) y^3}{(8 y-9)^3} = -\frac{27 x}{(1-4 x)^3}.
\end{equation} 
 Then the following Ramanujan-type series holds true: 
\begin{equation}\label{2999909293919090969390989A9M91A}
 \frac{1}{\pi} = \sum_{n = 
 0}^{\infty} \frac{ \left( \frac{1}{3} 
 \right)_{n} \left( \frac{1}{2} \right)_{n} \left( \frac{2}{3} \right)_{n} }{ \left( 1 \right)_{n}^3} z^{n} (a+bn), 
\end{equation}
 if we choose 
 \begin{align}
 z &=4 y(1-y), \nonumber \\ 
 a &= \frac{\sqrt{9-8 y}} {6 \sqrt{(1 - 
 4x)^{3}} \big(27 - 36 y+8 y^2\big)}\cdot \Big[ 2 \big(1-4x\big) \big(27 - 36 y + 
 8 y^2\big) \cdot\alpha (r) \nonumber \\ 
 &\quad+ 4x\Big(\big(27 - 36 y+8 y^2\big) -\big(16y-16y^2\big)\sqrt{1 - 
 x}\Big)\cdot \sqrt{r} \nonumber \\ 
 &\quad- \Big(\big(27 - 36 y + 8 y^2\big)-\big(27 - 44 y + 16 y^2\big)\sqrt{1-x}\Big)\cdot \sqrt{r}\Big], \nonumber \\ 
 b & = \frac{1}{3} \sqrt{r} (1-2 y), \label{20231006311000000000000000000005AM1A} 
 \end{align}
 provided that $|z|<1$ (cf.\ \cite[pp.\ 185--186]{BorweinBorwein1987text}) . 
 \end{theorem}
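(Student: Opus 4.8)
The plan is to obtain, in three steps, closed forms that force the series in \eqref{2999909293919090969390989A9M91A} to collapse to $1/\pi$. First I would produce a closed form for the generating function $\sum_{n\ge 0}D_n z^n$ in terms of $\mathbf K:=\mathbf K(\lambda^\ast(r))$ and $\mathbf E:=\mathbf E(\lambda^\ast(r))$; then differentiate it to get a companion closed form for $\sum_{n\ge 0}nD_n z^n$; and finally match the combination $\sum_{n\ge 0}D_n z^n(a+bn)$ against the defining relation \eqref{mainalpha} of the elliptic alpha function. For the first step, note that \eqref{mainobstacle} is precisely the statement that the arguments of the two $C_n$-series in \eqref{20230219259AM2A} and in \eqref{Baileycubicanother} agree, since $\tfrac{64(y-1)y^3}{(8y-9)^3}=\tfrac{64y^3(1-y)}{(9-8y)^3}$. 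Hence, with $z=4y(1-y)$, chaining \eqref{20230219259AM2A} and \eqref{Baileycubicanother} gives $\sum_{n\ge 0}D_n z^n=\tfrac{3\sqrt{1-4x}}{\sqrt{9-8y}}\sum_{n\ge 0}A_n x^n$. Applying the generating function $\sum_{n\ge 0}A_n x^n=\tfrac{4}{\pi^2}\mathbf K^2\!\big(\sqrt{(1-\sqrt{1-x})/2}\,\big)$ recalled in Section~2 together with \eqref{xinmain}, and using $1-x=\big(1-2(\lambda^\ast(r))^2\big)^2$ — the existence of $y\in(-\tfrac12,0)$ solving \eqref{mainobstacle} forces $0<x<\tfrac14$, and in the range with $\lambda^\ast(r)<1/\sqrt2$ this makes $1-2(\lambda^\ast(r))^2>0$ and $\sqrt{(1-\sqrt{1-x})/2}=\lambda^\ast(r)$ — one obtains
\begin{equation}\label{eqproposalcf1}
\sum_{n=0}^{\infty} D_n z^n \;=\; \frac{12\sqrt{1-4x}}{\pi^2\sqrt{9-8y}}\,\mathbf K^2 .
\end{equation}
The same bounds show that every series in sight has argument of modulus $<1$ (indeed $\ll 1$ when $|z|<1$), so all the rearrangements converge, with analytic continuation of \eqref{20230219259AM2A}--\eqref{Baileycubicanother} in their free variables invoked if these were originally established only near the origin.

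For the second step, regard $x$ as a function of $r$ through $k:=\lambda^\ast(r)$ (write $k':=\sqrt{1-k^2}$) and $y$ as a function of $x$ through \eqref{mainobstacle}, so that the right-hand side of \eqref{eqproposalcf1} is a function of $r$ alone. From $\mathbf K'/\mathbf K=\sqrt r$, the differential equation \eqref{mainODE}, and Legendre's relation one gets $\tfrac{dk}{dr}=-\tfrac{k(k')^2\mathbf K^2}{\pi\sqrt r}$, hence $\tfrac{d}{dr}\mathbf K^2=\tfrac{2\big((k')^2\mathbf K-\mathbf E\big)\mathbf K^3}{\pi\sqrt r}$ and $\tfrac{dx}{dr}=-\tfrac{8k^2(k')^2(1-2k^2)\mathbf K^2}{\pi\sqrt r}$; implicit differentiation of \eqref{mainobstacle}, where $\tfrac{d}{dy}\tfrac{64y^3(1-y)}{(9-8y)^3}=\tfrac{64y^2(27-36y+8y^2)}{(9-8y)^4}$ and neither side has a stationary point in the relevant range, then yields $\tfrac{dy}{dr}$ — hence $\tfrac{dz}{dr}$, which is nonzero there — as an algebraic multiple of $\tfrac{\mathbf K^2}{\pi\sqrt r}$. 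Therefore
\begin{equation}\label{eqproposalcf2}
\sum_{n=0}^{\infty} nD_n z^n \;=\; z\,\frac{d}{dz}\sum_{n=0}^{\infty}D_n z^n \;=\; \frac{z}{dz/dr}\cdot\frac{d}{dr}\!\left(\frac{12\sqrt{1-4x}}{\pi^2\sqrt{9-8y}}\,\mathbf K^2\right),
\end{equation}
and, the factor $\mathbf K^2$ produced by $dk/dr$ inside $dz/dr$ cancelling the one generated when $\tfrac{d}{dr}$ acts on $\mathbf K^2$, this simplifies to an expression of the shape $\tfrac{1}{\pi^2}\big(R_1\mathbf K^2+R_2\,\mathbf K\mathbf E\big)$, with $R_1,R_2$ built rationally from $\lambda^\ast(r)$, $y$ and $\sqrt r$.

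For the third step, substituting \eqref{eqproposalcf1} and \eqref{eqproposalcf2} into the series on the right of \eqref{2999909293919090969390989A9M91A} gives
\[
\sum_{n=0}^{\infty}D_n z^n(a+bn)\;=\;\frac{1}{\pi^2}\Big[\Big(a\,\frac{12\sqrt{1-4x}}{\sqrt{9-8y}}+bR_1\Big)\mathbf K^2+bR_2\,\mathbf K\mathbf E\Big],
\]
while multiplying \eqref{mainalpha} through by $4\mathbf K^2/\pi^2$ and rearranging gives $\tfrac{1}{\pi}=\tfrac{1}{\pi^2}\big[4(\alpha(r)-\sqrt r)\mathbf K^2+4\sqrt r\,\mathbf K\mathbf E\big]$. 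It therefore suffices to verify the two identities $bR_2=4\sqrt r$ and $a\,\tfrac{12\sqrt{1-4x}}{\sqrt{9-8y}}+bR_1=4(\alpha(r)-\sqrt r)$, read as algebraic identities in the symbols $\sqrt r$, $\alpha(r)$, $\lambda^\ast(r)$ and $y$ (subject to \eqref{mainobstacle} and $1-x=(1-2(\lambda^\ast(r))^2)^2$). The first determines $b$, and substituting $b=\tfrac13\sqrt r(1-2y)$ reduces it to $R_2=\tfrac{12}{1-2y}$. In the second, the part carrying $\alpha(r)$ is automatic: the coefficient of $\alpha(r)$ in the displayed $a$ equals $\tfrac{\sqrt{9-8y}}{3\sqrt{1-4x}}$, so $a\cdot\tfrac{12\sqrt{1-4x}}{\sqrt{9-8y}}$ contributes exactly $4\alpha(r)$; what remains is to check that the $\sqrt r$-terms of $a$, scaled by $\tfrac{12\sqrt{1-4x}}{\sqrt{9-8y}}$, together with $bR_1$, sum to $-4\sqrt r$.

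Since the skeleton (steps one and three) is short, the main obstacle is the computation of \eqref{eqproposalcf2} in step two and the simplifications in step three: one must carry $\tfrac{dx}{dr}$, $\tfrac{dy}{dr}$ and $\tfrac{d}{dr}\mathbf K^2$ correctly through the quotient rule, keep each square root on the positive real axis (which the hypotheses, by forcing $0<x<\tfrac14$, ensure), and then show that $R_2$ and the residual $\sqrt r$-combination collapse to $\tfrac{12}{1-2y}$ and $-4\sqrt r$ respectively, after \eqref{mainobstacle} is used to trade $x$-expressions for $y$-expressions. These reductions are lengthy but mechanical, and the appearance of $\sqrt{9-8y}$ and $27-36y+8y^2$ in the denominator of the stated $a$ — the footprints, respectively, of the prefactor in \eqref{20230219259AM2A} and of $\tfrac{d}{dy}\tfrac{64y^3(1-y)}{(9-8y)^3}$ — is a reassuring consistency check.
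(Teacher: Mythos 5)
Your proposal is correct and follows essentially the same route as the paper: chain \eqref{Baileycubicanother} with \eqref{20230219259AM2A} (the argument-matching condition being exactly \eqref{mainobstacle}) to obtain the closed form \eqref{202307216217A7M71A} for $\sum_n D_n z^n$, differentiate to express $\sum_n n D_n z^n$ in terms of $\mathbf{K}^2$ and $\mathbf{K}\mathbf{E}$, and eliminate $\mathbf{E}$ via \eqref{mainalpha} to isolate $1/\pi$. The only cosmetic difference is that you differentiate with respect to $r$ (importing $dk_r/dr$ via Legendre's relation) whereas the paper differentiates directly in the series argument through \eqref{mainODE}; the $\mathbf{K}^2/(\pi\sqrt{r})$ factors cancel in your quotient and the residual algebra is the same.
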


\begin{proof}
 For $x$ as in \eqref{xinmain}, we may obtain, from \eqref{Baileycubicanother}, that 
 $$ \sum_{n=0}^{\infty} C_{n} \left( -\frac{27x}{(1-4x)^3} \right)^{n} = \frac{ 4 
 \text{{\bf K}}^{2}\left( \lambda^{\ast}(r) \right) }{\pi^2} \sqrt{1-4x} $$
 and, via \eqref{mainODE}, that 
\begin{align*}
 & \sum_{n=0}^{\infty} 
 C_{n} \left( -\frac{27x}{(1-4x)^3} \right)^{n} n = 
 \frac{ \text{{\bf K}}( 
 \lambda^{\ast}(r) }{\pi^2}
 \frac{2 \sqrt{1-4 x} 
 ) }{ \sqrt{1-x} (8 x+1)} \\
 & \big( (2-8 x) \text{{\bf E}}( \lambda^{\ast} (r)) - 
 \left(-4 x+\sqrt{1-x}+1\right) \text{{\bf K}}( \lambda^{\ast} (r)) \big). 
\end{align*}
 For $y$ satisfying \eqref{mainobstacle} in the specified range, we may obtain, from 
 \eqref{20230219259AM2A}, that 
\begin{equation}\label{202307216217A7M71A}
 \sum_{n=0}^{\infty} 
 D_{n} \left( 4(1-y)y \right)^{n} 
 =
 \frac{12 \sqrt{1-4 x}}{ \sqrt{9-8 y}} 
 \frac{ \text{{\bf K}}^{2}\left( \lambda^{\ast}(r) \right)}{\pi^2},
\end{equation} 
 and, via \eqref{mainODE}, that 
\begin{align*}
 & \sum_{n=0}^{\infty}
 (4(1-y)y)^{n} D_{n} n = \frac{48 \sqrt{1-4 x} (y-1) y}{ (9-8 y)^{3/2} (2 y-1)} 
 \frac{ \text{{\bf K}}^{2}\left( 
 \lambda^{\ast}(r)\right)\textbf{}}{\pi^2} + \\
 & \frac{6 \sqrt{1-4 x} (4 y (2 y-9)+27)}{ \sqrt{1-x} (8 x+1) (9-8 y)^{3/2} (2 y-1)} \frac{\text{{\bf K}}( \lambda^{\ast}(r))}{\pi^2} \times \\
 & \left((8 x-2) \text{{\bf E}}(\lambda^{\ast}(r))+\left(-4 x+\sqrt{1 
 -x}+1\right) \text{{\bf K}}(\lambda^{\ast}(r))\right).
\end{align*}
  By rewriting $\text{{\bf E}}\left(   \lambda^{\ast}(r) \right)$  according to the elliptic   alpha function identity in  \eqref{mainalpha},   we may express  
  $\sum_{n=0}^{\infty}  (4(1-y)y)^{n} D_{n} n$   as a linear combination of   an algebraic multiple of $\frac{1}{\pi}$   and an algebraic multiple of   $  
 \frac{\text{{\bf K}}^{2}\left( \lambda^{\ast}(r) \right)}{\pi^2}$.   The desired result can then be shown to follow by taking an appropriate linear  
  combination of    $ \sum_{n=0}^{\infty}   (4(1-y)y)^{n} D_{n} n $   and the left-hand side of  \eqref{202307216217A7M71A}. More explicitly, we would 
 obtain that the  $b$-value in \eqref{2999909293919090969390989A9M91A}, for $x$ and $y$ 
 as specified in \eqref{xinmain} and \eqref{mainobstacle}, is such that 
\begin{equation}\label{20231006316AM1111111111111111A}
 b = \frac{(1+8x) \sqrt{1-x}\cdot (1-2 y) \sqrt{(9-8 y)^{3}} \cdot \sqrt{r}}{3 \sqrt{(1-4x)^{3}} \big(27 - 36 y+8 y^2\big)}, 
\end{equation}
 with the right-hand side of \eqref{20231006316AM1111111111111111A}
 reducing to the specified value in \eqref{20231006311000000000000000000005AM1A}. 
\end{proof}

\subsection{Proofs of Aldawoud's conjectures}\label{subsectionProofs}
 Aldawoud \cite[p.\ 14]{Aldawoud2012} 
 provided the formula 
 $$ \sqrt{1-108x} \sum_{k=0}^{\infty} \frac{ (3k)! (2k)! }{ \left( k! \right)^{5} } 
 \left( k + \lambda \right) x^{k} = \frac{\sqrt{3/N}}{\pi} $$ 
 as a special case 
 of a formula given as 
\begin{equation}\label{20231003777273727A7M1A}
 \sqrt{1-4ax} \sum_{k=0}^{\infty} \binom{2k}{k} s(k) (k + \lambda) x^{k} = \frac{1}{\rho} 
\end{equation}
 in \cite[p.\ 10]{Aldawoud2012}, referring to 
 \cite{Aldawoud2012} for details. 
 As stated in \cite[p.\ 10]{Aldawoud2012}, 
 the evaluation of the parameter $\lambda$ in 
 \eqref{20231003777273727A7M1A} depends on a result 
 due to Chan, Chan, and Lui \cite{ChanChanLiu2004} whereby 
\begin{equation}\label{CCLderivative}
 \lambda = \frac{x}{2N} \frac{dM}{dx} \Big|_{q = e^{-2\pi/\sqrt{N \ell}}}, 
\end{equation}
 where 
\begin{equation}\label{20231003302A77777777777777777777M772A}
 M(q) = \frac{Z(q)}{Z\left( q^{N} \right) } 
\end{equation}
 and 
\begin{equation}\label{ZqAldawoud}
 Z(q) = \sum_{k=0}^{\infty} h(k) x^{k}(q), 
\end{equation}
    referring to \cite{Aldawoud2012,ChanChanLiu2004} for details.      However, actually finding and proving closed forms for the required     values in  
    \eqref{CCLderivative}--\eqref{ZqAldawoud}     is, typically, of a very challenging nature.      As described in \cite[\S3]{Aldawoud2012},     conjectured values   
    for $x$ and $\lambda$ were obtained experimentally, via numerical experiments with Maple.     In particular, the Maple {\tt identify} command was applied in  
    \cite[\S3]{Aldawoud2012}      to obtain conjectured closed forms.      However, proofs for the experimentally discovered values for $x$ and $\lambda$      are  
    not given in \cite{Aldawoud2012},      and proving these conjectured valuations is very difficult.  

 A conjectured Ramanujan-type series given in Table 3.7 in \cite[p.\ 28]{Aldawoud2012} is such that 
\begin{equation}\label{input19}
 \sqrt{1 - 108x} \sum_{k=0}^{\infty} \frac{ \left( \frac{1}{3} \right)_{k} 
 \left( \frac{1}{2} \right)_{k} \left( \frac{2}{3} \right)_{k} }{ \left( k! \right)^{3} } 
 (k + \lambda) (108 x)^{k} 
 = \frac{\sqrt{3/N}}{\pi}, 
\end{equation}
 where 
\begin{align}
 & N = 19, \label{Nequals19} \\
 & x = -\frac{4261}{22781250}-\frac{3913}{91125000} \sqrt{19}, \\ 
 & \lambda = \frac{49}{510} + \frac{73}{9690} \sqrt{19}. \label{twoafterN19} 
\end{align}
 As below, we are to prove that the conjectured Ramanujan-type series in \eqref{input19}, for the specified values in 
 \eqref{Nequals19}--\eqref{twoafterN19}, agrees with the $r = 57$ case of Theorem \ref{maintheorem}. 

 A series for $\frac{1}{\pi}$ is said to be \emph{quadratic-irrational} if an algebraic number times this series yields an infinite sum of quadratic irrational 
 numbers \cite[p.\ 7]{Aldawoud2012}. 
 Given the huge amount of interest in \emph{rational} Ramanujan-type series, 
 which refer to Ramanujan-type series for $\frac{1}{\pi}$ that yield a rational expansion 
 after being multiplied by an algebraic value, this motivates our proofs of 
 Aldawoud's conjectured and experimentally discovered results on quadratic-irrational Ramanujan-type series, 
 which may be seen as a natural ``next step'' after rational Ramanujan-type series. 

\begin{theorem}
 Aldawoud's conjectured Ramanujan-type series formula \cite[p.\ 28]{Aldawoud2012} 
\begin{align}
\begin{split}
 \frac{1}{\pi}
 = & \sum_{n = 0}^{\infty} \frac{ \left( \frac{1}{3} \right)_{n} 
 \left( \frac{1}{2} \right)_{n} \left( \frac{2}{3} \right)_{n} }{ \left( 1 \right)_{n}^{3} } 
 \Bigg( \frac{5719+13 \sqrt{19}}{2250} n + \frac{1654+133 \sqrt{19}}{6750} \Bigg) \times \\ 
 & \ \ \ \ \ \left( \frac{-17044-3913 \sqrt{19}}{843750} \right)^{n} 
\end{split}\label{202310011111111110100000485707PM1A}
 \end{align}
 holds true. 
\end{theorem}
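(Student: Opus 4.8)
The plan is to show that \eqref{202310011111111110100000485707PM1A} is exactly the $r = 57$ specialization of Theorem \ref{maintheorem}. The choice $r = 57$ is forced by the fact that Aldawoud's parameter is $N = 19$ together with the classical correspondence between the cubic theory at level $19$ and the classical singular value at $3\cdot 19 = 57$ (the same correspondence implicit in the discussion preceding Theorem \ref{maintheorem}, written there with $3n = 57$). So the first task is to make explicit the two singular values that enter Theorem \ref{maintheorem} at $r = 57$, namely $\lambda^{\ast}(57)$ and $\alpha(57)$. Using the defining relation \eqref{202310067777372787A7M1A} for $G_n$ and the classical identity $4\big(\lambda^{\ast}(r)\big)^2\big(1-(\lambda^{\ast}(r))^2\big) = G_r^{-24}$, the quantity $x$ of \eqref{xinmain} is simply $x = G_{57}^{-24}$; I would insert the known closed form of the class invariant $G_{57}$ (equivalently of $\lambda^{\ast}(57)$), obtaining an explicit algebraic number, together with the known value of the elliptic alpha function $\alpha(57)$, which can be reduced to the class-number-one data at $19$ through a cubic modular equation for $\alpha$.

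With $x$ fixed, I would next locate the root $y$ of the obstruction equation \eqref{mainobstacle} in $(-\tfrac12,0)$. As a function of $y$, the left-hand side of \eqref{mainobstacle} is negative and monotone on $(-\tfrac12,0)$ with values in $(-\tfrac{12}{2197},0)$, while the right-hand side $-27x/(1-4x)^3$ is a small negative number in that interval, so the required root exists and is unique. The candidate is $y = \tfrac12\big(1-\sqrt{1-z}\big)$ with $z = \tfrac{-17044-3913\sqrt{19}}{843750}$; since $-3 < z < 0$ this $y$ does lie in $(-\tfrac12,0)$, and $|z| < 1$ ensures the convergence hypothesis of Theorem \ref{maintheorem} is met. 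Verifying that this $y$ actually satisfies \eqref{mainobstacle} for the value of $x$ found above is then purely the checking of an identity between two explicit elements of $\mathbb{Q}(\sqrt{19})$; indeed it is precisely the relation $\tfrac{(9-8\alpha_{19})^3}{64\alpha_{19}^3(1-\alpha_{19})} = \tfrac{(4G_{57}^{24}-1)^3}{27G_{57}^{24}}$ recalled before Theorem \ref{maintheorem} (with $\alpha_{19}$ the cubic singular modulus), rewritten in the variable $y$. Once this holds, $z = 4y(1-y)$ is forced, and after clearing denominators it equals the value $108\,x_{\mathrm{Ald}}$ appearing in \eqref{input19}.

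It remains to evaluate the coefficients from \eqref{20231006311000000000000000000005AM1A}. For $b = \tfrac13\sqrt{57}\,(1-2y)$ the computation is immediate: since $1 - z = (1-2y)^2$ and $y < \tfrac12$ we get $b = \tfrac13\sqrt{57}\,\sqrt{1-z} = \sqrt{19/3}\,\sqrt{1-108 x_{\mathrm{Ald}}}$, matching the normalization of \eqref{input19}, and extracting the nested radical $\sqrt{1-z}\in\mathbb{Q}(\sqrt{3},\sqrt{19})$ and simplifying yields $\tfrac{5719+13\sqrt{19}}{2250}$. For $a$ I would substitute the explicit values of $x$ (hence of $\sqrt{1-x} = 1 - 2(\lambda^{\ast}(57))^2$), of $y$, of $\sqrt{r}=\sqrt{57}$, and of $\alpha(57)$ into the bracketed expression of \eqref{20231006311000000000000000000005AM1A}, then simplify over $\mathbb{Q}(\sqrt{3},\sqrt{19})$; the outcome should be $\tfrac{1654+133\sqrt{19}}{6750}$, equivalently $a = \lambda_{\mathrm{Ald}}\,b$ with $\lambda_{\mathrm{Ald}} = \tfrac{49}{510}+\tfrac{73}{9690}\sqrt{19}$. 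Comparing the triple $(z,a,b)$ with the one read off from \eqref{202310011111111110100000485707PM1A} then completes the argument.

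The main obstacle is twofold and computational. First, one must pin down and justify the correct explicit forms of $G_{57}$ (equivalently $\lambda^{\ast}(57)$) and of $\alpha(57)$: since $\mathbb{Q}(\sqrt{-57})$ has class number $4$ these are not ``minimal'' singular values, and care is needed in choosing the right algebraic conjugate and the right signs for the nested square roots $\sqrt{1-x}$ and $\sqrt{1-z}$. Second, the simplification of the bracketed $a$-expression in \eqref{20231006311000000000000000000005AM1A} is long; the bookkeeping---keeping every quantity inside the fixed field $\mathbb{Q}(\sqrt{3},\sqrt{19})$ and tracking the denominators $\sqrt{(1-4x)^3}$ and $27-36y+8y^2$---is where the real work lies. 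Beyond Theorem \ref{maintheorem} itself, no new conceptual input is needed: everything reduces to verifying identities in a fixed number field.
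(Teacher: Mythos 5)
Your proposal follows essentially the same route as the paper's proof: specialize Theorem \ref{maintheorem} to $r=57$, import known closed forms for $\lambda^{\ast}(57)$ (via the class invariant $G_{57}$) and $\alpha(57)$, verify that $y=\tfrac12\bigl(1-\sqrt{1-z}\bigr)$ (the paper's value $\tfrac{1}{750}\bigl(375-\sqrt{\tfrac16(860794+3913\sqrt{19})}\bigr)$) satisfies \eqref{mainobstacle}, and then reduce $z$, $a$, $b$ algebraically to the conjectured constants. The only cosmetic difference is that the paper obtains $\alpha(57)$ from the tabulated value of $\sigma(57)$ via \eqref{20777777273717070747874727PM1A} rather than through a cubic modular equation, and the final field is $\mathbb{Q}(\sqrt{3},\sqrt{19})$ (not just $\mathbb{Q}(\sqrt{19})$), as you note later yourself.
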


\begin{proof}
 Our strategy is to make use of the $r = 57$ case of Theorem \ref{maintheorem} together with 
 closed forms for $\lambda^{\ast}(57)$ and $ 
 \alpha(57)$. In this regard, 
 the explicit evaluation $$ G_{57}^{-6} = \left( \frac{ 3 \sqrt{19} - 13 }{\sqrt{2}} \right) \left( \frac{\sqrt{3}-1}{\sqrt{2}} \right)^{3} 
 $$ is given by Borwein and Borwein in \cite[p.\ 148]{BorweinBorwein1987text}. We thus obtain that 
\begin{equation}\label{202302221202024282432AAA}
 \lambda^{\ast}\left( 57 \right) 
 = \frac{1}{2} \left( \sqrt{1 + G_{57}^{-12}} - \sqrt{1 - G_{57}^{-12}} \right), 
\end{equation}
 according to an equivalent definition for the elliptic lambda function \cite[p.\ 161]{BorweinBorwein1987text}. 
 According to Theorem 5.3 from Borwein and Borwein's text \cite[p.\ 158]{BorweinBorwein1987text}, 
 we have that 
\begin{equation}\label{20777777273717070747874727PM1A}
 \alpha(p) = \sqrt{p} 
 \frac{1 + \left( \lambda^{\ast}(p) \right)^{2}}{3} 
 - \frac{\sigma(p)}{6}, 
\end{equation}
 for $\sigma(p) := R_{p}(k', k) $ 
 as defined in \cite[\S5.2]{BorweinBorwein1987text}, 
 with $k := e^{-\pi \sqrt{p}}$. 
 The formula 
 $$\sigma(57) = 3 G_{57}^{-6} \sqrt{ \frac{2 \sqrt{19} + 5 \sqrt{3} }{2} } 
 \left( 5 \sqrt{57} + 13 \sqrt{19} + 49 \sqrt{3} + 19 \right) $$
 is given in the Borwein brothers' text \cite[p.\ 167]{BorweinBorwein1987text}, 
 so that the closed form obtained from \eqref{202302221202024282432AAA}, 
 in conjunction with the 
 $p = 57$ case of \eqref{20777777273717070747874727PM1A}, 
 give us a closed form for $\alpha(57)$. 
 As in Theorem 
 \ref{maintheorem}, 
 we write $ x = 4 \big( \big(\lambda^{\ast}(r)\big)^2 - \big(\lambda^{\ast}(r)\big)^4\big)$, and we may verify that the value 
\begin{equation}\label{20231006332A11111111222222222222aaAA2M22}
 y = \frac{1}{750} \left(375-\sqrt{\frac{1}{6} \left(860794+3913 \sqrt{19}\right)}\right) 
\end{equation}
 is such that \eqref{mainobstacle} holds. 
 It is thus a matter of routine 
 to demonstrate how 
 the values of $z$, $a$, and $b$ 
 in Theorem \ref{maintheorem}, 
 subject to the closed forms for $\lambda^{\ast}(57)$ and $\alpha(57)$ and $y$, 
 reduce to the corresponding values in \eqref{202310011111111110100000485707PM1A}. 
\end{proof}

\begin{remark}
 As a byproduct of our method, the $y$-values 
 involved in our proofs, as in \eqref{20231006332A11111111222222222222aaAA2M22}, 
 give us new, closed-form evaluations for the 
 cubic singular modulus function defined in \eqref{cubicalpha}. 
\end{remark}

 The fourth convergent series conjectured by Aldawoud and listed in Table 3.7 in 
 \cite[p.\ 28]{Aldawoud2012} is given in an equivalent form in 
 the below Theorem. 

 \begin{theorem}
 Aldawoud's conjectured Ramanujan-type series formula \cite[p.\ 28]{Aldawoud2012} 
\begin{align*}
 \frac{1}{\pi}
 = & \sum_{n=0}^{\infty} \frac{ \left( \frac{1}{3} \right)_{n} 
 \left( \frac{1}{2} \right)_{n} \left( \frac{2}{3} \right)_{n} }{ \left( 1 \right)_{n}^{3} } 
 \left( \frac{217 + 35113 \sqrt{31}}{60750} n 
 + \frac{14662 + 7843 \sqrt{31} }{182250} \right) \times \\
 & \ \ \ \ \ \left( \frac{-1368394-245791 \sqrt{31}}{615093750} \right)^{n}
\end{align*}
 holds true. 
\end{theorem}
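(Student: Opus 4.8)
The plan is to follow exactly the strategy of the two preceding proofs. Since Aldawoud's $\sqrt{19}$ formula was shown to agree with the $r = 3\cdot 19 = 57$ instance of Theorem \ref{maintheorem}, I expect the present $\sqrt{31}$ formula to correspond to $N = 31$ and hence to the $r = 3\cdot 31 = 93$ case of Theorem \ref{maintheorem}. The argument then splits into three tasks: (i) securing closed forms for $\lambda^{\ast}(93)$ and $\alpha(93)$; (ii) exhibiting the solution $y \in (-\tfrac12,0)$ of \eqref{mainobstacle} attached to $r = 93$; and (iii) verifying that the resulting $z$, $a$, $b$ from Theorem \ref{maintheorem} collapse to the stated values.

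For task (i), I would first record a closed form for Ramanujan's class invariant $G_{93}$, equivalently $G_{93}^{-12}$, from the Borweins' text or from standard tables of Weber--Ramanujan class invariants. Since $93 = 3\cdot 31$ and the relevant form class group (of fundamental discriminant $-372$) has exponent two, $G_{93}$ lies in a multiquadratic extension of $\mathbb{Q}$ and admits an explicit nested-radical expression over $\mathbb{Q}(\sqrt{3},\sqrt{31})$. From this, $\lambda^{\ast}(93)$ follows through the identity $\lambda^{\ast}(r) = \tfrac12\big(\sqrt{1 + G_r^{-12}} - \sqrt{1 - G_r^{-12}}\,\big)$ used in \eqref{202302221202024282432AAA}. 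For $\alpha(93)$ I would invoke the $p = 93$ case of \eqref{20777777273717070747874727PM1A}, namely $\alpha(p) = \sqrt{p}\,\tfrac{1 + (\lambda^{\ast}(p))^2}{3} - \tfrac{\sigma(p)}{6}$, together with a closed form for $\sigma(93) = R_{93}(k',k)$; this last ingredient can be assembled from the Borweins' $R$-function formalism in \S5.2 of \cite{BorweinBorwein1987text}, or by combining known values of $\sigma(3)$ and $\sigma(31)$ with the appropriate composition relations, again yielding a nested radical (expectedly of the shape $\sqrt{a\sqrt{31}+b\sqrt{3}}$ times an algebraic factor) over $\mathbb{Q}(\sqrt{3},\sqrt{31})$.

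With $x = 4\big((\lambda^{\ast}(93))^2 - (\lambda^{\ast}(93))^4\big)$ fixed by task (i), task (ii) is to produce the unique $y \in (-\tfrac12,0)$ with $\tfrac{64(y-1)y^3}{(8y-9)^3} = -\tfrac{27x}{(1-4x)^3}$; guided by the shape of \eqref{20231006332A11111111222222222222aaAA2M22}, I expect $y$ to be of the form $\tfrac{1}{c}\big(c' - \sqrt{d + d'\sqrt{31}}\,\big)$ for explicit rationals $c,c',d,d'$, and membership in $(-\tfrac12,0)$ should be immediate once the numerics are in place, exactly as for $r = 57$. Task (iii) is then the routine — if bulky — algebraic simplification: substituting $x$, $y$, $\lambda^{\ast}(93)$, and $\alpha(93)$ into the formulas of Theorem \ref{maintheorem} and checking that $z = 4y(1-y)$ equals $\tfrac{-1368394 - 245791\sqrt{31}}{615093750}$, that $b$ equals $\tfrac{217 + 35113\sqrt{31}}{60750}$, and that $a$ equals $\tfrac{14662 + 7843\sqrt{31}}{182250}$, along with the verification $|z| < 1$ needed for convergence.

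The main obstacle is task (i): unlike the $r = 57$ case, where $G_{57}^{-6}$ and $\sigma(57)$ could simply be quoted from \cite{BorweinBorwein1987text}, the evaluations of $G_{93}$ and $\sigma(93)$ may have to be derived, and getting these class-invariant computations correct — and in a form that simplifies cleanly in task (iii) — is where the real effort lies. Once $\lambda^{\ast}(93)$ and $\alpha(93)$ are in hand, tasks (ii) and (iii) are mechanical and mirror the $\sqrt{19}$ proof verbatim.
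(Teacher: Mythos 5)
Your proposal matches the paper's proof: the paper likewise invokes the $r=93$ case of Theorem \ref{maintheorem}, obtains $\lambda^{\ast}(93)$ from $G_{93}^{-6}$ and $\alpha(93)$ from $\sigma(93)$ via \eqref{20777777273717070747874727PM1A}, exhibits the explicit $y$ solving \eqref{mainobstacle}, and finishes with the routine simplification of $z$, $a$, $b$. The only difference is that the obstacle you flag in task (i) does not arise, since both $G_{93}^{-6}$ and $\sigma(93)$ are quoted directly from \cite[pp.~148, 168]{BorweinBorwein1987text}, exactly as in the $r=57$ case.
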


\begin{proof}
 Our strategy is to make use of the $r = 93$ case of 
 Theorem \ref{maintheorem}. 
 The explicit evaluation 
\begin{equation}\label{G93power}
 G_{93}^{-6} = \left( \frac{39 - 7 \sqrt{31}}{\sqrt{2}} \right) 
 \left( \frac{\sqrt{31} - 3 \sqrt{3}}{2} \right)^{3/2} 
\end{equation}
 is given in Borwein and Borwein's text \cite[p.\ 148]{BorweinBorwein1987text}, 
 and the explicit evaluation 
\begin{equation}\label{207237177007787170797A7c7o7mm7a77M71A}
 \sigma(93) = 6 G_{93}^{-6} \left( \frac{\sqrt{3} + 1}{2} \right)^{3} 
 (15 \sqrt{93} + 13 \sqrt{31} + 201 \sqrt{3} + 217) 
\end{equation}
 is given in the same text \cite[p.\ 168]{BorweinBorwein1987text}. 
 The closed form in \eqref{G93power} leads us to the closed form 
\begin{equation}\label{20727371707078771718781878A878M81A}
 \lambda^{\ast}\left( 93 \right) 
 = \frac{1}{2} \left( \sqrt{1 + G_{93}^{-12}} - \sqrt{1 - G_{93}^{-12}} \right), 
\end{equation}
 and the closed form in \eqref{207237177007787170797A7c7o7mm7a77M71A} 
 gives us, according to the $p = 93$
 case of \eqref{20777777273717070747874727PM1A}, a closed form for $\alpha(93)$. 
 The closed form in \eqref{20727371707078771718781878A878M81A} leads us to a closed form 
 for $ x = 4 \left( \left(\lambda^{\ast}(93)\right)^2 - \left(\lambda^{\ast}(93)\right)^4\right)$, 
 and we may verify that $x$ and 
 $$ y = \frac{-1368394-245791 \sqrt{31}}{121500 \left(10125+\sqrt{\frac{1}{6} \left(616462144+245791 \sqrt{31}\right)}\right)} $$ 
 satisfy the desired relation in \eqref{mainobstacle}, with $y$ in the specified interval. 
 So, from the specified values for $r$, $x$, $y$, and $\alpha(r)$, it is a matter of routine 
 to verify, with the use of Theorem \ref{maintheorem}, 
 our closed forms for $z$, $a$, and $b$. 
\end{proof}

 The below proof of one of Aldawoud's conjectures from \cite{Aldawoud2012} is especially remarkable 
 due to the recursive approach used in this proof and due to the 
 extremely nontrivial computations required and due to its ``computer proof'' nature. 

\begin{theorem}
 Aldawoud's conjectured Ramanujan-type series formula \cite[p.\ 28]{Aldawoud2012} 
\begin{align}
\begin{split}
 \frac{1}{\pi}
 = & \sum_{n=0}^{\infty} \frac{ \left( \frac{1}{3} \right)_{n} 
 \left( \frac{1}{2} \right)_{n} \left( \frac{2}{3} \right)_{n} }{ \left( 1 \right)_{n}^{3} } 
 \left( \frac{297 \sqrt{3}-91 \sqrt{11}}{64} n + \frac{75 \sqrt{3}-33 \sqrt{11}}{64} \right) \times \\
 & \ \ \ \ \ \left( \frac{ 2457 \sqrt{33} - 14121 }{2048} \right)^{n}
\end{split}\label{2023180808888175727A7M1A}
\end{align}
 holds true. 
\end{theorem}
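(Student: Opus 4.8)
The plan is to realize \eqref{2023180808888175727A7M1A} as the $r = 99$ case of Theorem \ref{maintheorem}, continuing the pattern by which the two preceding conjectures were matched with the $r = 57$ and $r = 93$ cases (the relevant entry of Aldawoud's table has $N = 33$, so that $r = 3N = 99$). Writing $k = \lambda^{\ast}(99)$, the $G$-function description of the elliptic lambda function in \eqref{202310067777372787A7M1A} gives $4 k^{2}(1 - k^{2}) = G_{99}^{-24}$; hence the quantity $x$ in \eqref{xinmain} is simply $x = G_{99}^{-24}$, and $\sqrt{1 - x} = 1 - 2k^{2} = \sqrt{1 - G_{99}^{-24}}$. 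So the first task is to produce closed forms for $G_{99}$ and for $\alpha(99)$.

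This is the point at which the argument becomes harder than the previous two, and acquires its recursive character. Since $99 = 3^{2}\cdot 11$ is not a prime multiple of $3$, neither $G_{99}$ nor $\sigma(99)$ is tabulated in \cite{BorweinBorwein1987text}, in contrast to $\sigma(57)$ and $\sigma(93)$, which were quoted verbatim. I would instead take $r = 11$ as the base case: because $\mathbb{Q}(\sqrt{-11})$ has class number one, $G_{11}$ (equivalently $\lambda^{\ast}(11)$) and $\alpha(11)$ are classical and recorded in \cite{BorweinBorwein1987text}. I would then climb from $r = 11$ to $r = 99$ recursively, solving the degree-$3$ modular equation relating $\lambda^{\ast}(11)$ to $\lambda^{\ast}(9 \cdot 11) = \lambda^{\ast}(99)$ (equivalently, the cubic $P$--$Q$ modular equation relating $G_{11}$ and $G_{99}$) and then propagating the elliptic alpha function from $\alpha(11)$ to $\alpha(99)$ by the companion transformation formula coming from \cite{BorweinBorwein1987text}. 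Carrying out this ascent --- selecting the correct branch of the modular equation and reducing the nested radicals that arise along the way --- is the step I expect to be the main obstacle, and it is what gives the proof its ``computer-proof'' flavour: unlike the $r = 57$ and $r = 93$ cases, it cannot be shortened to a single table lookup.

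Granting those closed forms, the remainder follows the template of the earlier proofs. I would exhibit
\[
 y = \frac{155 - 27\sqrt{33}}{128},
\]
check that $y \in \bigl(-\tfrac12,\, 0\bigr)$ and that, with $x = G_{99}^{-24}$, this $y$ satisfies \eqref{mainobstacle} and is its unique root in that interval, and note that $|z| = |4 y(1 - y)| < 1$, so the hypotheses of Theorem \ref{maintheorem} are in force. Since $z = 4 y(1 - y)$ and $b = \tfrac13\sqrt{99}\,(1 - 2y) = \sqrt{11}\,(1 - 2y)$ depend only on $y$ and $r$, a short computation turns them into $z = \tfrac{2457\sqrt{33} - 14121}{2048}$ and $b = \tfrac{297\sqrt{3} - 91\sqrt{11}}{64}$, which are the base and the coefficient of $n$ in \eqref{2023180808888175727A7M1A}.

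The genuinely heavy step is the verification of the constant term $a$. Inserting $r = 99$, $\sqrt{r} = 3\sqrt{11}$, $x = G_{99}^{-24}$, $\sqrt{1 - x} = \sqrt{1 - G_{99}^{-24}}$, the closed form for $\alpha(99)$, and the above $y$ into the $a$-expression of Theorem \ref{maintheorem} produces something assembled from $\alpha(99)$, the nested radical $\sqrt{9 - 8y}$ (nested because $9 - 8y = \tfrac{1}{16}(27\sqrt{33} - 11)$ is not a square in $\mathbb{Q}(\sqrt{33})$), the radical $\sqrt{1 - G_{99}^{-24}}$, and rational functions of $y \in \mathbb{Q}(\sqrt{33})$; the claim is that all of this collapses over $\mathbb{Q}(\sqrt{3},\sqrt{11})$ to $a = \tfrac{75\sqrt{3} - 33\sqrt{11}}{64}$. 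This final simplification is ``routine'' only in principle: in practice it forces one to keep several nested square roots under control at once, which --- together with the recursive construction of $\alpha(99)$ --- is precisely what the phrase ``extremely nontrivial computations'' in the framing of the theorem is warning about.
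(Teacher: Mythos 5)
Your proposal is correct and follows essentially the same route as the paper: the $r=99$ case of Theorem \ref{maintheorem}, with $\lambda^{\ast}(11)$ and $\alpha(11)$ as the tabulated base case, an ascent to $G_{99}$, $\lambda^{\ast}(99)$, and $\alpha(99)$ via the degree-3 modular recursions for $G_{9n}$ and $\alpha(9r)$ from \cite{BorweinBorwein1987text}, the value $y=\tfrac{1}{128}(155-27\sqrt{33})$ satisfying \eqref{mainobstacle}, and a minimal-polynomial-style verification of the $a$-value. The paper executes exactly this with explicit Mathematica code (deriving $y$ from the quartic's root formula rather than merely exhibiting it), but the mathematical content is identical.
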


\begin{proof}

 Our strategy is to use the $r = 99$ case of Theorem \ref{maintheorem}. 
 Let 
 \begin{align}\label{eq:rho-value} 
		\rho=\tfrac{1}{6}\big({\textstyle{-\sqrt[3]{16}}+\sqrt[3]{38-6\sqrt{33}}+\sqrt[3]{38+6\sqrt{33}}}\big)
	\end{align}
 be the unique real solution to the cubic equation
	\begin{align}\label{eq:rho-cubic}
		2\rho^3+2^{\frac{4}{3}}\rho^2-1=0.
	\end{align}
 Then
 \begin{align}
		\lambda^*(11) &= \sqrt{\tfrac{1-\sqrt{1-4\rho^{12}}}{2}},\label{eq:lambda-11}\\
		\alpha(11) &= \tfrac{\left(3-\sqrt{1-4\rho^{12}}\right)\sqrt{11}}{6} - \tfrac{3\rho^6+2\rho^3+2}{3}.\label{eq:alpha-11}
	\end{align} 
 The value for $\lambda^{\ast}(11)$ 
 is given, in an equivalent form in \cite[p.\ 162]{BorweinBorwein1987text}. 
 The value for $\alpha(11)$ is given in an equivalent way 
 in \cite[p.\ 158]{BorweinBorwein1987text}. 
 According to the recursion 
\begin{equation}\label{GntoG9n}
 9 = \left( 1 + 2 \sqrt{2} \frac{ G_{9n}^{3} }{ G_{n}^{9} } \right) 
 \left( 1 + 2 \sqrt{2} \frac{ G_{n}^{3} }{ G_{9n}^{9} } \right) 
\end{equation}
 given in \cite[p.\ 145]{BorweinBorwein1987text} 
 together with the recursion 
 $$ \alpha(9r) = s^2(r) \alpha(r) 
 - \frac{ \sqrt{r} (s^2(r) + 2 s(r) - 3) }{2} $$ 
 given in \cite[p.\ 160]{BorweinBorwein1987text}, where 
 $$ s(r) := \sqrt{1 + 4 \frac{ (k k')^{3/4} }{ (ll')^{1/4} }}, $$
 and where $l:= \lambda^{\ast}(r)$ and $k:= \lambda^{\ast}(9r)$, 
 we may derive closed forms for $\lambda^{\ast}(99)$ and $\alpha(99)$, 
 as below, letting 
 $$ G_{11} = \frac{1}{\sqrt[12]{2} \sqrt[24]{\frac{1}{2} \left(1-\sqrt{1-4 \rho ^{12}}\right)-\frac{1}{4} \left(1-\sqrt{1-4 \rho ^{12}}\right)^2}}. $$

 Inputting 
\begin{verbatim}
Solve[(64 (y - 1) y^3)/(8 y - 9)^3 == -((27 x)/(1 - 4 x)^3) , y]
\end{verbatim}
 into Mathematica, we may verify that the corresponding output exhausts the possible ways of expressing $y$ in terms of $x$, subject to the desired 
 relation in \eqref{mainobstacle}.  Out of all of the possible ways of expressing $y$ in terms of $x$,  we may verify that: If  $ x = 4 \big( \big( 
 \lambda^{\ast}(99) \big)^{2} - \big( \lambda^{\ast}(99) \big)^{4} \big)$, then it follows that $y$  is equal to the value denoted as ``{\tt y}'' in the 
 Mathematica input shown below.  We have that $G_{99}$, according to the recursion in \eqref{GntoG9n}, 
 is equal to the expression denoted as ``{\tt G99}'' in the Mathematica input shown below. 
 This allows us to determine a closed form for 
\begin{equation}\label{202302221202024282qqq4qqq3q2qAqAA}
 \lambda^{\ast}\left( 99 \right) 
 = \frac{1}{2} \left( \sqrt{1 + G_{99}^{-12}} - \sqrt{1 - G_{99}^{-12}} \right). 
\end{equation}
 Our strategy, at this point, is to show that $4y(1-y)$, for the value of $y$ specified above, is equal 
 to the convergence rate in \eqref{2023180808888175727A7M1A}. This, would give 
 us an explicit closed form for the $z$-value involved in Theorem 
 \ref{maintheorem}, for the $r = 99$ case. 
 So, to show that 
\begin{equation}\label{strategyPolynomial}
 4 y(1-y) - \frac{2457 \sqrt{33} - 14121}{2048} 
\end{equation}
 vanishes, our strategy is to use the {\tt MinimalPolynomial} command in the Wolfram language, i.e., 
 to show that the minimal polynomial for \eqref{strategyPolynomial} is given
 by a polynomial of degree $1$ with $0$ as its constant term. 

\begin{verbatim}
\[Rho] = (-2^(4/3) + (38 - 6*Sqrt[33])^(1/3) + (38 + 6*Sqrt[33])^
(1/3))/6;

G11 = 1/(2^(1/12)*(Sqrt[(1 - Sqrt[1 - 4*\[Rho]^12])/2]^2 - 
Sqrt[(1 - Sqrt[1 - 4*\[Rho]^12])/2]^4)^(1/24));

G99 = (G11^9/Sqrt[2] + Sqrt[G11^2 + G11^10 + G11^18]/Sqrt[2] + 
Sqrt[-G11^2 - G11^10 + 2*G11^18 - (2*G11^3)/Sqrt[G11^2 + G11^10 + 
G11^18] + (2*G11^27)/Sqrt[G11^2 + G11^10 + G11^18]]/Sqrt[2])^(1/3);

lambda99 = (1/2)*(Sqrt[1 + G99^(-12)] - Sqrt[1 - G99^(-12)]);

x = 4*(lambda99^2 - lambda99^4);

y = -((1 - 228*x + 48*x^2 - 64*x^3)/(4*(-1 + 4*x)^3)) + (1/2)*
Sqrt[-((729*x)/(-1 + 4*x)^3) + (1 - 228*x + 48*x^2 - 64*x^3)^2/(4*
(-1 + 4*x)^6) + (243*x)/(-1 + 12*x - 48*x^2 + 64*x^3) + (81*(-x + 
12*x^2 - 48*x^3 + 64*x^4))/(2*2^(2/3)*(-1 + 4*x)^3*(x - 51*x^2 + 
564*x^3 - 2576*x^4 + 5568*x^5 - 6144*x^6 + 4096*x^7 + Sqrt[x^2 + 
6*x^3 - 159*x^4 - 472*x^5 + 11376*x^6 - 2304*x^7 - 383232*x^8 + 
1124352*x^9 + 3575808*x^10 - 25821184*x^11 + 55050240*x^12 - 
50331648*x^13 + 16777216*x^14])^(1/3)) + (1/(4*2^(1/3)*(-1 + 4*x)^
3))*27*(x - 51*x^2 + 564*x^3 - 2576*x^4 + 5568*x^5 - 6144*x^6 + 
4096*x^7 + Sqrt[x^2 + 6*x^3 - 159*x^4 - 472*x^5 + 11376*x^6 - 
2304*x^7 - 383232*x^8 + 1124352*x^9 + 3575808*x^10 - 25821184*x^
11 + 55050240*x^12 - 50331648*x^13 + 16777216*x^14])^(1/3)] - (1/
2)*Sqrt[-((729*x)/(-1 + 4*x)^3) + (1 - 228*x + 48*x^2 - 64*x^3)^2/
(2*(-1 + 4*x)^6) - (243*x)/(-1 + 12*x - 48*x^2 + 64*x^3) - (81*
(-x + 12*x^2 - 48*x^3 + 64*x^4))/(2*2^(2/3)*(-1 + 4*x)^3*(x - 51*
x^2 + 564*x^3 - 2576*x^4 + 5568*x^5 - 6144*x^6 + 4096*x^7 + 
Sqrt[x^2 + 6*x^3 - 159*x^4 - 472*x^5 + 11376*x^6 - 2304*x^7 - 
383232*x^8 + 1124352*x^9 + 3575808*x^10 - 25821184*x^11 + 
55050240*x^12 - 50331648*x^13 + 16777216*x^14])^(1/3)) - (1/(4*2^
(1/3)*(-1 + 4*x)^3))*27*(x - 51*x^2 + 564*x^3 - 2576*x^4 + 5568*x^
5 - 6144*x^6 + 4096*x^7 + Sqrt[x^2 + 6*x^3 - 159*x^4 - 472*x^5 + 
11376*x^6 - 2304*x^7 - 383232*x^8 + 1124352*x^9 + 3575808*x^10 - 
25821184*x^11 + 55050240*x^12 - 50331648*x^13 + 16777216*x^14])^
(1/3) + ((6561*x)/(-1 + 4*x)^3 + (2916*x*(1 - 228*x + 48*x^2 - 
64*x^3))/(-1 + 4*x)^6 - (1 - 228*x + 48*x^2 - 64*x^3)^3/(-1 + 4*
x)^9)/(4*Sqrt[-((729*x)/(-1 + 4*x)^3) + (1 - 228*x + 48*x^2 - 64*
x^3)^2/(4*(-1 + 4*x)^6) + (243*x)/(-1 + 12*x - 48*x^2 + 64*x^3) + 
(81*(-x + 12*x^2 - 48*x^3 + 64*x^4))/(2*2^(2/3)*(-1 + 4*x)^3*(x - 
51*x^2 + 564*x^3 - 2576*x^4 + 5568*x^5 - 6144*x^6 + 4096*x^7 + 
Sqrt[x^2 + 6*x^3 - 159*x^4 - 472*x^5 + 11376*x^6 - 2304*x^7 - 
383232*x^8 + 1124352*x^9 + 3575808*x^10 - 25821184*x^11 + 
55050240*x^12 - 50331648*x^13 + 16777216*x^14])^(1/3)) + (1/(4*2^
(1/3)*(-1 + 4*x)^3))*27*(x - 51*x^2 + 564*x^3 - 2576*x^4 + 5568*x^
5 - 6144*x^6 + 4096*x^7 + Sqrt[x^2 + 6*x^3 - 159*x^4 - 472*x^5 + 
11376*x^6 - 2304*x^7 - 383232*x^8 + 1124352*x^9 + 3575808*x^10 - 
25821184*x^11 + 55050240*x^12 - 50331648*x^13 + 16777216*x^14])^
(1/3)])]

MinimalPolynomial[4*y*(1 - y) - (2457*Sqrt[33] - 14121)/2048, w]
\end{verbatim}

 The output gives us that it is a matter of routine to verify the desired vanishing of 
 \eqref{strategyPolynomial}. This gives us that 
\begin{equation}\label{yafterstrategy}
 y = \frac{1}{128} \left(155-27 \sqrt{33}\right). 
\end{equation}
 Resetting the inputted value for {\tt y} in Mathematica as 
\begin{verbatim}
y = (1/128)*(155 - 27*Sqrt[33])
\end{verbatim}
 and inputting
\begin{verbatim}
r = 99;

lambda11 = Sqrt[(1 - Sqrt[1 - 4*\[Rho]^12])/2];

s11 = Sqrt[1 + 4*((lambda99*Sqrt[1 - lambda99^2])^(3/4)/(lambda11*
Sqrt[1 - lambda11^2])^(1/4))];

alpha11 = ((3 - Sqrt[1 - 4*\[Rho]^12])*Sqrt[11])/6 - (3*\[Rho]^6 + 
2*\[Rho]^3 + 2)/3;

alpha99 = s11^2*alpha11 - (Sqrt[11]*(s11^2 + 2*s11 - 3))/2;

a = (Sqrt[9 - 8*y]/(6*Sqrt[(1 - 4*x)^3]*(27 - 36*y + 8*y^2)))*(2*
(1 - 4*x)*(27 - 36*y + 8*y^2)*alpha99 + 4*x*((27 - 36*y + 8*y^
2) - (16*y - 16*y^2)*Sqrt[1 - x])*Sqrt[r] - ((27 - 36*y + 8*y^2) - 
(27 - 44*y + 16*y^2)*Sqrt[1 - x])*Sqrt[r]);

MinimalPolynomial[a - (75*Sqrt[3] - 33*Sqrt[11])/64, w]
\end{verbatim}
\noindent into Mathematica, we find that it is a matter of routine to verify that 
 the $a$-value in Theorem \ref{maintheorem} 
 reduces so that 
 $ a = \frac{1}{64} \left(75 \sqrt{3}-33 \sqrt{11}\right)$, 
 as desired. From the closed form for $y$ in \eqref{yafterstrategy}, 
 it is immediate that the desired $b$-value holds, 
 according to the relation $ b = \frac{1}{3} \sqrt{r} (1-2 y)$ 
 specified in Theorem \ref{maintheorem}. 
\end{proof}

\subsection{A new Ramanujan-type series}\label{subsectionnewRamanujan}
 To the best of our knowledge, the only previously published and previously proved 
 Ramanujan-type series of level 3 with negative convergence rates are listed below, 
 apart from Aldawoud's conjectures given in \cite{Aldawoud2012}. 
 The Ramanujan-type series among 
 \eqref{9290929390929199 727PM1A}--\eqref{20230302110111112111111P1M1A} are due to 
 Chan, Liaw, $\&$ Tan \cite{ChanLiawTan2001}, 
 and the Ramanujan-type series in 
 \eqref{202303021108P200addM1A}--\eqref{lastknown} is due to 
 Berkovich, Chan, $\&$ Schlosser \cite{BerkovichChanSchlosser2018}. 
 Each of the following results can be shown to be a special case of Theorem \ref{maintheorem}. 
\begin{align}
 & \sum_{n=0}^{\infty} \left( -\frac{9}{16} \right)^{n}
 \frac{ \left( \frac{1}{2} \right)_{n} \left( \frac{1}{3} \right)_{n} \left( \frac{2}{3} \right)_{n} }{ (n!)^3 }
 (5n+1) = \frac{4}{\pi \sqrt{3}}, \label{9290929390929199 727PM1A} \\ 
 & \sum_{n=0}^{\infty} \left( -\frac{1}{16} \right)^{n} \frac{ \left( \frac{1}{2} \right)_{n} 
 \left( \frac{1}{3} \right)_{n} 
 \left( \frac{2}{3} \right)_{n} }{(n!)^{3}} (51n+7) = \frac{12 \sqrt{3}}{\pi}, \label{202303021101001001007PM1A} \\ 
 & \sum_{n=0}^{\infty} \left( -\frac{1}{80} \right)^{n} \frac{ \left( \frac{1}{2} \right)_{n}
 \left( \frac{1}{3} \right)_{n} 
 \left( \frac{2}{3} \right)_{n} }{(n!)^3} (9n+1) = \frac{4 \sqrt{3}}{\pi \sqrt{5}}, \\ 
 & \sum_{n=0}^{\infty} \left( -\frac{1}{1024} \right)^{n} \frac{ \left( \frac{1}{2} \right)_{n} 
 \left( \frac{1}{3} \right)_{n} \left( \frac{2}{3} \right)_{n} }{(n!)^3} (615 n + 53) = \frac{96 \sqrt{3}}{\pi}, \\ 
 & \sum_{n=0}^{\infty} \left( -\frac{1}{3024} \right)^{n}
 \frac{ \left( \frac{1}{2} \right)_{n} 
 \left( \frac{1}{3} \right)_{n} 
 \left( \frac{2}{3} \right)_{n} }{(n!)^3} (165n+13) = \frac{2^2 \cdot 3^3}{\pi \sqrt{7}}, \\
 & \sum_{n=0}^{\infty} \frac{(-1)^n}{500^{2n}} \frac{ \left( \frac{1}{2} \right)_{n} 
 \left( \frac{1}{3} \right)_{n} \left( \frac{2}{3} \right)_{n} }{(n!)^{3}} (14151n + 827) 
 = \frac{1500\sqrt{3}}{\pi}, \label{20230302110111112111111P1M1A} \\
 & \sum_{n=0}^{\infty} \left( -\frac{194}{1331} - \frac{225 \sqrt{3}}{2662} \right)^{n} \frac{ 
 \left( \frac{1}{2} \right)_{n} \left( \frac{1}{3} \right)_{n} 
 \left( \frac{2}{3} \right)_{n} }{(n!)^3} \cdot \label{202303021108P200addM1A} \\
 & \left( \left( \frac{45 \sqrt{3}}{22} + \frac{5}{22} \right) n 
 + \frac{13}{66} + \frac{3 \sqrt{3}}{11} \right) = \frac{\sqrt{3}}{\pi}. \label{lastknown} 
\end{align}

 The Berkovich--Chan--Schlosser formula reproduced in 
 \eqref{202303021108P200addM1A}--\eqref{lastknown} \cite{BerkovichChanSchlosser2018} 
 is the same as the Ramanujan-type series corresponding to the $N = 11$
 case of Table 3.7 in \cite[p.\ 28]{Aldawoud2012}. 
 So, the proof via Wronskians of theta functions 
 of \eqref{202303021108P200addM1A}--\eqref{lastknown} 
 due to Berkovich et al.\ \cite{BerkovichChanSchlosser2018} 
 actually solves an open problem from 
 2012 \cite{Aldawoud2012}, 
 but the authors of \cite{BerkovichChanSchlosser2018} 
 were unaware of this. The same formula in 
 \eqref{202303021108P200addM1A}--\eqref{lastknown} 
 can also be proved 
 using our main identity given in  Theorem \ref{maintheorem}, 
 according to the $r = 33$  case, and with the use of known 
 values for $\lambda^{\ast}(33)$ and $\alpha(33)$
 that may be obtained from the \textit{Pi and the {AGM}} 
  text \cite{BorweinBorwein1987text}.  
  This has led us to consider the $r =  30$ case of  
  Theorem \ref{maintheorem},  as below.   It appears that the Ramanujan-type series highlighted in 
  Theorem   \eqref{newspecialcasetheorem} below is new. 

\begin{theorem}\label{newspecialcasetheorem}
 The Ramanujan-type series formula 
\begin{align*}
 \frac{1}{\pi} 
  = & \sum_{n=0}^{\infty} 
   \frac{ \left( \frac{1}{3} \right)_{n} 
  \left( \frac{1}{2} \right)_{n} \left( \frac{2}{3} \right)_{n} }{ \left(  1  \right)_{n}^{3} } 
 \left( \frac{22617 \sqrt{15}+27314 \sqrt{10}-36036 \sqrt{6}-87049}{3375} \right)^{n} \times \\
 & \Bigg( \frac{54 \sqrt{15}+68 \sqrt{10}-72 \sqrt{6}-193}{135} + 
 \frac{63 \sqrt{15}+71 \sqrt{10}-63 \sqrt{6}-217}{45} n \Bigg). 
\end{align*}
 holds true. 
\end{theorem}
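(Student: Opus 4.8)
The plan is to recognise the stated formula as the $r=30$ instance of Theorem \ref{maintheorem}, carried out exactly as in the proofs above, so the first step is to assemble closed forms for $\lambda^{\ast}(30)$ and $\alpha(30)$. Both are available from \emph{Pi and the AGM} \cite{BorweinBorwein1987text}: the singular modulus $\lambda^{\ast}(30)$ is recorded there, and a closed form for $\alpha(30)$ is obtained from the $p=30$ case of \eqref{20777777273717070747874727PM1A} together with the listed value of $\sigma(30)$. One then sets $x=4\big((\lambda^{\ast}(30))^{2}-(\lambda^{\ast}(30))^{4}\big)$ as in \eqref{xinmain}; here $x$ is a small negative element of $\mathbb{Q}(\sqrt{2},\sqrt{3},\sqrt{5})$, and one also records the simplification $\sqrt{1-x}=1-2(\lambda^{\ast}(30))^{2}$, which is convenient for the later $a$-computation.

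Next I would solve the governing relation \eqref{mainobstacle} for $y$. Feeding \eqref{mainobstacle} into Mathematica's {\tt Solve} and enumerating the branches, one isolates the unique root lying in $(-\tfrac12,0)$; numerically $y\approx-0.0916$, and one checks that
$$ y=\frac{450-945\sqrt{2}-710\sqrt{3}+378\sqrt{5}+217\sqrt{30}}{900} $$
satisfies \eqref{mainobstacle} and lies in the required interval. With $y$ in hand one puts $z=4y(1-y)$ and verifies, via a {\tt MinimalPolynomial} (or {\tt RootReduce}) computation of the sort used above for the $r=99$ case, that $4y(1-y)$ equals the advertised convergence rate $\tfrac{1}{3375}\big(22617\sqrt{15}+27314\sqrt{10}-36036\sqrt{6}-87049\big)$; since $|z|\approx 0.40<1$, the hypothesis $|z|<1$ of Theorem \ref{maintheorem} is satisfied. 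The coefficient of $n$ then follows immediately from the identity $b=\tfrac13\sqrt{30}\,(1-2y)$ of Theorem \ref{maintheorem}, which collapses to $\tfrac{1}{45}\big(63\sqrt{15}+71\sqrt{10}-63\sqrt{6}-217\big)$.

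The step I expect to be the main obstacle is the verification of the $a$-value. Substituting the closed forms for $\lambda^{\ast}(30)$, $\alpha(30)$, $x$, and $y$ into the expression for $a$ in Theorem \ref{maintheorem} and reducing it to $\tfrac{1}{135}\big(54\sqrt{15}+68\sqrt{10}-72\sqrt{6}-193\big)$ requires substantial symbolic work: the intermediate expressions involve $\alpha(30)$, $\sqrt{30}$, $\sqrt{1-x}$, $\sqrt{(1-4x)^{3}}$ and $\sqrt{9-8y}$, and one must check that their combination collapses into the small field $\mathbb{Q}(\sqrt{6},\sqrt{10},\sqrt{15})$ and matches the stated value; as in the $r=99$ proof, the clean way to do this is to form the minimal polynomial of $a-\tfrac{1}{135}\big(54\sqrt{15}+68\sqrt{10}-72\sqrt{6}-193\big)$ and confirm that it is linear with vanishing constant term. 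Once that reduction is confirmed, the $r=30$ case of Theorem \ref{maintheorem} yields precisely the claimed Ramanujan-type series, completing the proof.
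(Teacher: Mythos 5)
Your proposal follows essentially the same route as the paper's proof: it invokes Theorem \ref{maintheorem} with $r=30$, feeds in closed forms for $\lambda^{\ast}(30)$ and $\alpha(30)$, isolates the root $y\in(-\tfrac12,0)$ of \eqref{mainobstacle}, and verifies by symbolic computation (minimal polynomials) that $z=4y(1-y)$, $b=\tfrac13\sqrt{30}\,(1-2y)$, and the $a$-expression collapse to the stated surds, and your explicit $y$ agrees numerically with the paper's nested-radical form of $y$. Two minor points to fix: $x=4\big((\lambda^{\ast}(30))^{2}-(\lambda^{\ast}(30))^{4}\big)$ is a small \emph{positive} number, not negative (which is exactly what makes the right-hand side of \eqref{mainobstacle} negative and a root $y\in(-\tfrac12,0)$ possible), and the paper does not obtain $\alpha(30)$ via $\sigma(30)$ and \eqref{20777777273717070747874727PM1A} (whose use in the paper is confined to odd arguments such as $57$ and $93$) but simply quotes a known closed form for $\alpha(30)$, so you should either verify that a $\sigma(30)$-based evaluation is actually available or cite the known value of $\alpha(30)$ directly as the paper does.
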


\begin{proof}
 This can be shown to follow in a direct way from Theorem \ref{maintheorem}, 
 for the $r = 30$ case, with the use of known closed forms
 such that 
 $$ \lambda^{\ast}(30) = \left( \sqrt{3} - \sqrt{2} \right)^{2} \left( 2 - \sqrt{3} \right) 
 \left( \sqrt{6} - \sqrt{5} \right) \left( 4 - \sqrt{15} \right) $$
 and 
\begin{align*}
 \alpha\left( 30 \right) 
 = & \frac{1}{2} \Big\{ \sqrt{30} - \left( 2 + \sqrt{5} \right)^{2} \left( 3 + \sqrt{10} \right) ^{2} \times \\ 
 & \left( - 6 - 5 \sqrt{2} - 3 \sqrt{5} - 2 \sqrt{10} 
 + \sqrt{6} \sqrt{57 + 40 \sqrt{2}} \right) \times \\
 & \left[ 56 + 38 \sqrt{2} + \sqrt{30} \left( 2 + \sqrt{5} \right) \left( 3 + \sqrt{10} \right) \right] \Big\}.
\end{align*}
 Using these closed forms, it is a matter of routine to show that the unique $y$-value 
 in Theorem \ref{maintheorem} is such that 
 $$ y = \frac{1}{2} \left(1-\sqrt{1+\frac{87049+36036 \sqrt{6}-6750 \sqrt{\frac{3026695259}{9112500}+\frac{34320041 \sqrt{\frac{2}{3}}}{84375}}}{3375}}\right), $$
 and that the the evaluation of the $z$- and $a$- and $b$-values 
 reduce in the specified manner. 
\end{proof}

 The new Ramanujan-type series highlighted above is 
 inspired by Baruah and Berndt's Ramanujan-type series with summands involving quartic expressions, 
 including the following results introduced in \cite{BaruahBerndt2010}: 
\begin{align*}
	\frac{3\sqrt{2} + \sqrt{5} + 2}{\pi} 
 	 & = \sum_{n=0}^{\infty} 
 \frac{ \left( \frac{1}{2} \right)_{n}^3 }{ \left( n! \right)^3} 
 \left( (3+\sqrt{5}) (2 + \sqrt{5}) (3\sqrt{2} - \sqrt{5} - 2) \right)^{2n}\notag\\
 & \ \ \ \ \ \big( 2 \sqrt{10}
	- 3 \sqrt{5} + 5 \sqrt{2} - 4 + (15 \sqrt{2} + 6 \sqrt{10} - 6 \sqrt{5}) n\big), \\
 \frac{\sqrt{6}+\sqrt{2}+1}{\pi } 
 & = \sum_{n=0}^{\infty} 
 \frac{ \left( \frac{1}{2} \right)_{n}^{3} }{ \left( n! \right)^{3} } 
 \Big( \left( 6 \sqrt{3}+3 \sqrt{6}-6 \right) 
 n + 2 \sqrt{3}+\sqrt{6}-3-\sqrt{2} \Big) \\ 
 & \ \ \ \ \ \left(8 \left(\sqrt{2}+1\right)^2 \left(\sqrt{3}-\sqrt{2}\right)^3 
 \left(2-\sqrt{3}\right)^3\right)^n, \\
 \frac{8}{\pi } 
 & = \sum_{n = 
 0}^{\infty} \frac{ \left( \frac{1}{2} \right)_{n}^{3} }{(n!)^3} 
 \left( \frac{1 - \sqrt{5}}{4} \right)^{3 n} 
 \left(\frac{\sqrt{5}+1}{2} - \sqrt{\frac{\sqrt{5}+1}{2} }\right)^{6 n} \\ 
 & \ \ \ \ \ \Bigg(2 \left(\left(15+5 \sqrt{5}\right) \sqrt{\sqrt{5}+1}-7 \sqrt{10}-5 \sqrt{2}\right) n + \\ 
 & \ \ \ \ \ \left(9+3 \sqrt{5}\right)
 \sqrt{\sqrt{5}+1}-7 \sqrt{2}-5 \sqrt{10}\Bigg). 
\end{align*}

\section{Conclusion}
 We conclude with two open problems concerning values shown in 
 Table \ref{sqrtqqtqabqlqe}. 

\begin{table}[t]
\centering

 \begin{tabular}{ | c | c | }
 \hline 
 $N$ & $r$ \\ \hline 
 11 & 33 \\ \hline 
 13 & 39 \\ \hline 
 19 & 57 \\ \hline 
 31 & 93 \\ \hline 
 33 & 99 \\ \hline 
 59 & 177 \\ 
 \hline
 \end{tabular}

 \ 

\caption{The $N$-indices for the alternating Ramanujan-type series of level 3 conjectured by 
 Aldawoud \cite{Aldawoud2012} and given in Tables 3.7 and 3.8 in \cite{Aldawoud2012}, 
 along with the corresponding $r$-values according to Theorem \ref{maintheorem}. }\label{sqrtqqtqabqlqe} 
\end{table}

 The Ramanujan-type series correpsonding to the $N = 13$ case of Table 3.7 in 
 \cite[p.\ 28]{Aldawoud2012} can be proved via the $r = 39$ case of Theorem 
 \ref{maintheorem}, but this would require closed forms for 
 $\lambda^{\ast}(39)$ and $\alpha(39)$. 
 How can the values of $\lambda^{\ast}(39)$ 
 and $\alpha(39)$ be evaluated in closed form? 
 We would also require closed forms for 
 $\lambda^{\ast}(177)$ and $\alpha(177)$, 
 to apply Theorem \ref{maintheorem}
 to prove Aldawoud's conjecture 
 concerning the $N = 59$ case shown in Table 3.7 
 in \cite[p.\ 28]{Aldawoud2012}. 
 How can the values of $\lambda^{\ast}(177)$ 
 and $\alpha(177)$ be evaluated in closed form? 

\subsection*{Acknowledgements}
 The author is grateful to acknowledge support from a Killam Postdoctoral Fellowship and is thankful to Shane Chern, who has provided the author 
 with extremely useful comments and insights concerning the material in this article. 
 The author also wants to thank Christophe Vignat and Tanay 
 Wakhare for useful and engaging discussions. 

\bibliographystyle{amsplain}

\end{document}